\newcommand{\mymod}[3]{#1 \equiv #2 \kern -0.5em \pmod{#3}}
\newcommand{\mynotmod}[3]{#1 \not \equiv #2 \kern -0.6em \pmod{#3}}
\theoremstyle{plain}
\newtheorem{theorem}{Theorem}[section]
\newtheorem{corollary}[theorem]{Corollary}
\newtheorem{lemma}[theorem]{Lemma}
\theoremstyle{remark}
\theoremstyle{definition}
\title[The Gelin-Ces\`aro identity in third-order Jacobsthal sequences]{The Gelin-Ces\`aro identity in some third-order Jacobsthal sequences}
\author[G. Cerda-Morales]{\scriptsize GAMALIEL CERDA-MORALES$^{1}$}
\date{}
\begin{document}
\maketitle

\vspace{-20pt}
\begin{center}
{\footnotesize $^1$Instituto de Matem\'aticas, Pontificia Universidad Cat\'olica de Valpara\'iso, \\
Blanco Viel 596, Valpara\'iso, Chile.\\
E-mails: gamaliel.cerda@usm.cl / gamaliel.cerda.m@mail.pucv.cl
}\end{center}

\hrule

\begin{abstract}
In this paper, we deal with two families of third-order Jacobsthal sequences. The first family consists of generalizations of the Jacobsthal sequence. We show that the Gelin-Ces\`aro identity is satisfied. Also, we define a family of generalized third-order Jacobsthal sequences $\{\mathbb{J}_{n}^{(3)}\}_{n\geq 0}$ by the recurrence relation $$\mathbb{J}_{n+3}^{(3)}=\mathbb{J}_{n+2}^{(3)}+\mathbb{J}_{n+1}^{(3)}+2\mathbb{J}_{n}^{(3)},\ n\geq0,$$ with initials conditions $\mathbb{J}_{0}^{(3)}=a$, $\mathbb{J}_{1}^{(3)}=b$ and $\mathbb{J}_{2}^{(3)}=c$, where $a$, $b$ and $c$ are non-zero real numbers. Many sequences in the literature are special cases of this sequence. We find the generating function and Binet's formula of the sequence. Then we show that the Cassini and Gelin-Ces\`aro identities are satisfied by the indices of this generalized sequence.
\end{abstract}

\medskip
\noindent
\subjclass{\footnotesize {\bf Mathematical subject classification:} 
05A15, 11B39.}

\medskip
\noindent
\keywords{\footnotesize {\bf Key words:} Third-order Jacobsthal sequence, generating function, Jacobsthal sequence, generalized third-order Jacobsthal sequence.}
\medskip

\hrule

\section{Introduction and Preliminaries}\label{sec:1}
\setcounter{equation}{0}

The Jacobsthal numbers have many interesting properties and applications in many fields of science (see, e.g., \cite{Ba,Hor2,Hor3}). The Jacobsthal numbers $J_{n}$ are defined by the recurrence relation
\begin{equation}\label{e1}
J_{0}=0,\ J_{1}=1,\ J_{n+2}=J_{n+1}+2J_{n},\ n\geq0.
\end{equation}
Another important sequence is the Jacobsthal-Lucas sequence. This sequence is defined by the recurrence relation $j_{n+2}=j_{n+1}+2j_{n}$, where $j_{0}=2$ and $j_{1}=1$ (see, \cite{Hor3}).

In \cite{Cook-Bac} the Jacobsthal recurrence relation is extended to higher order recurrence relations and the basic list of identities provided by A. F. Horadam \cite{Hor3} is expanded and extended to several identities for some of the higher order cases. For example, the third-order Jacobsthal numbers, $\{J_{n}^{(3)}\}_{n\geq0}$, and third-order Jacobsthal-Lucas numbers, $\{j_{n}^{(3)}\}_{n\geq0}$, are defined by
\begin{equation}\label{e2}
J_{n+3}^{(3)}=J_{n+2}^{(3)}+J_{n+1}^{(3)}+2J_{n}^{(3)},\ J_{0}^{(3)}=0,\ J_{1}^{(3)}=J_{2}^{(3)}=1,\ n\geq0,
\end{equation}
and 
\begin{equation}\label{e3}
j_{n+3}^{(3)}=j_{n+2}^{(3)}+j_{n+1}^{(3)}+2j_{n}^{(3)},\ j_{0}^{(3)}=2,\ j_{1}^{(3)}=1,\ j_{2}^{(3)}=5,\ n\geq0,
\end{equation}
respectively.

Some of the following properties given for third-order Jacobsthal numbers and third-order Jacobsthal-Lucas numbers are used in this paper (for more details, see \cite{Cer,Cer1,Cook-Bac}). Note that Eqs. (\ref{e7}) and (\ref{e12}) have been corrected in this paper, since they have been wrongly described in \cite{Cook-Bac}.
\begin{equation}\label{e4}
3J_{n}^{(3)}+j_{n}^{(3)}=2^{n+1},
\end{equation}
\begin{equation}\label{e5}
j_{n}^{(3)}-3J_{n}^{(3)}=2j_{n-3}^{(3)},\ n\geq3,
\end{equation}
\begin{equation}\label{ec5}
J_{n+2}^{(3)}-4J_{n}^{(3)}=\left\{ 
\begin{array}{ccc}
-2 & \textrm{if} & \mymod{n}{1}{3} \\ 
1 & \textrm{if} & \mynotmod{n}{1}{3}
\end{array}%
\right. ,
\end{equation}
\begin{equation}\label{e6}
j_{n}^{(3)}-4J_{n}^{(3)}=\left\{ 
\begin{array}{ccc}
2 & \textrm{if} & \mymod{n}{0}{3} \\ 
-3 & \textrm{if} & \mymod{n}{1}{3}\\ 
1 & \textrm{if} & \mymod{n}{2}{3}%
\end{array}%
\right. ,
\end{equation}
\begin{equation}\label{e7}
j_{n+1}^{(3)}+j_{n}^{(3)}=3J_{n+2}^{(3)},
\end{equation}
\begin{equation}\label{e8}
j_{n}^{(3)}-J_{n+2}^{(3)}=\left\{ 
\begin{array}{ccc}
1 & \textrm{if} & \mymod{n}{0}{3} \\ 
-1 & \textrm{if} & \mymod{n}{1}{3} \\ 
0 & \textrm{if} & \mymod{n}{2}{3}%
\end{array}%
\right. ,
\end{equation}
\begin{equation}\label{e9}
\left( j_{n-3}^{(3)}\right) ^{2}+3J_{n}^{(3)}j_{n}^{(3)}=4^{n},
\end{equation}
\begin{equation}\label{e10}
\sum\limits_{k=0}^{n}J_{k}^{(3)}=\left\{ 
\begin{array}{ccc}
J_{n+1}^{(3)} & \textrm{if} & \mynotmod{n}{0}{3} \\ 
J_{n+1}^{(3)}-1 & \textrm{if} & \mymod{n}{0}{3}%
\end{array}%
\right. 
\end{equation}
and
\begin{equation}\label{e12}
\left( j_{n}^{(3)}\right) ^{2}-9\left( J_{n}^{(3)}\right)^{2}=2^{n+2}j_{n-3}^{(3)},\ n\geq3.
\end{equation}

Using standard techniques for solving recurrence relations, the auxiliary equation, and its roots are given by 
$$x^{3}-x^{2}-x-2=0;\ x = 2,\ \textrm{and}\ x=\frac{-1\pm i\sqrt{3}}{2}.$$ 

Note that the latter two are the complex conjugate cube roots of unity. Call them $\omega_{1}$ and $\omega_{2}$, respectively. Thus the Binet formulas can be written as
\begin{equation}\label{b1}
J_{n}^{(3)}=\frac{2}{7}2^{n}-\frac{3+2i\sqrt{3}}{21}\omega_{1}^{n}-\frac{3-2i\sqrt{3}}{21}\omega_{2}^{n}
\end{equation}
and
\begin{equation}\label{b2}
j_{n}^{(3)}=\frac{8}{7}2^{n}+\frac{3+2i\sqrt{3}}{7}\omega_{1}^{n}+\frac{3-2i\sqrt{3}}{7}\omega_{2}^{n},
\end{equation}
respectively. Now, we use the notation
\begin{equation}\label{h1}
V_{n}^{(2)}=\frac{A\omega_{1}^{n}-B\omega_{2}^{n}}{\omega_{1}-\omega_{2}}=\left\{ 
\begin{array}{ccc}
2 & \textrm{if} & \mymod{n}{0}{3} \\ 
-3 & \textrm{if} & \mymod{n}{1}{3} \\ 
1& \textrm{if} & \mymod{n}{2}{3}
\end{array}%
\right. ,
\end{equation}
where $A=-3-2\omega_{2}$ and $B=-3-2\omega_{1}$. Furthermore, note that for all $n\geq0$ we have 
\begin{equation}\label{im}
V_{n+2}^{(2)}=-V_{n+1}^{(2)}-V_{n}^{(2)},\ V_{0}^{(2)}=2\ \textrm{and}\ V_{1}^{(2)}=-3.
\end{equation}

From the Binet formulas (\ref{b1}), (\ref{b2}) and Eq. (\ref{h1}), we have
\begin{equation}\label{h2}
J_{n}^{(3)}=\frac{1}{7}\left(2^{n+1}-V_{n}^{(2)}\right)\ \textrm{and}\ j_{n}^{(3)}=\frac{1}{7}\left(2^{n+3}+3V_{n}^{(2)}\right).
\end{equation}

On the other hand, the Gelin-Ces\`aro identity \cite[p. 401]{Di} states that
\begin{equation}\label{ge-ce}
F_{n}^{4}-F_{n-2}F_{n-1}F_{n+1}F_{n+2}=1,
\end{equation}
where $F_{n}$ is the classic $n$-th Fibonacci number. Furthermore, Melham and Shannon \cite{Me-Sha} obtained generalizations of the Gelin-Ces\`aro identity. Recently, Sahin \cite{Sa} showed that the Gelin-Ces\`aro identity is satisfied for two families of conditional sequences. 

Motivated by \cite{Me-Sha,Sa}, in this paper, we deal with two families of third-order Jacobsthal sequences. The first family consists of the sequences denoted by $\{J_{n}^{(3)}\}$ and studied in \cite{Cer,Cook-Bac}. We show that the Gelin-Ces\`aro identity is satisfied by the sequence $\{J_{n}^{(3)}\}$. Also, we define a family of generalized third-order Jacobsthal sequences $\{\mathbb{J}_{n}^{(3)}\}$ by the recurrence relation $\mathbb{J}_{n+3}^{(3)}=\mathbb{J}_{n+2}^{(3)}+\mathbb{J}_{n+1}^{(3)}+2\mathbb{J}_{n}^{(3)}$ ($n\geq 0$) with initials conditions $\mathbb{J}_{0}^{(3)}=a$, $\mathbb{J}_{1}^{(3)}=b$ and $\mathbb{J}_{2}^{(3)}=c$, where $a$, $b$ and $c$ are non-zero real numbers. Many sequences in the literature are special cases of this generalized sequence. We find the generating function and Binet formula for the sequence $\{\mathbb{J}_{n}^{(3)}\}_{n\geq0}$. Then, we show that Catalan and Gelin-Ces\`aro identities are satisfied by this generalized sequence.

\section{The first family of third-order Jacobsthal sequences}
\setcounter{equation}{0}

Recently, the authors introduced in \cite{Cook-Bac} a further generalization of the Jacobsthal sequence, namely the third-order Jacobsthal  sequence defined by Eq. (\ref{e2}). Then,
\begin{lemma}[Catalan Identity for $J_{n}^{(3)}$]\label{lem1}
For any nonnegative integers $n$ and $r$, we have
\begin{equation}
\left(J_{n}^{(3)}\right)^{2}-J_{n-r}^{(3)}J_{n+r}^{(3)}=\frac{1}{49}\left\lbrace
\begin{array}{c}
2^{n+1}\left(2^{r}V_{n-r}^{(2)}-2V_{n}^{(2)}+2^{-r}V_{n+r}^{(2)}\right)\\
+7\left(U_{r}^{(2)}\right)^{2}
\end{array}
\right\rbrace,
\end{equation}
where $U_{r}^{(2)}=j_{r-1}^{(3)}-J_{r+1}^{(3)}$.
\end{lemma}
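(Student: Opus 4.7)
The plan is to reduce the whole computation to the Binet-type formula \eqref{h2}, which expresses $J_n^{(3)}$ as a sum of a pure power-of-two part and the auxiliary periodic sequence $V_n^{(2)}$. Writing $7J_n^{(3)}=2^{n+1}-V_n^{(2)}$ and squaring, and similarly writing $49\,J_{n-r}^{(3)}J_{n+r}^{(3)}=(2^{n-r+1}-V_{n-r}^{(2)})(2^{n+r+1}-V_{n+r}^{(2)})$, the $4^{n+1}$ terms cancel in the difference, and the surviving cross-terms group naturally into the factor $2^{n+1}\bigl(2^{r}V_{n-r}^{(2)}-2V_n^{(2)}+2^{-r}V_{n+r}^{(2)}\bigr)$ appearing in the claim. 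Thus after this bookkeeping the whole lemma reduces to the single residual identity
\begin{equation*}
\bigl(V_n^{(2)}\bigr)^{2}-V_{n-r}^{(2)}V_{n+r}^{(2)}=7\bigl(U_r^{(2)}\bigr)^{2}.
\end{equation*}

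To establish this residual identity I would use the explicit form \eqref{h1}, $V_n^{(2)}=(A\omega_1^{n}-B\omega_2^{n})/(\omega_1-\omega_2)$ with $A=-3-2\omega_2$ and $B=-3-2\omega_1$. Expanding the left side, the ``diagonal'' pieces $A^2\omega_1^{2n}$ and $B^{2}\omega_2^{2n}$ cancel against the matching terms from $V_{n-r}^{(2)}V_{n+r}^{(2)}$, and the remaining terms factor as
\begin{equation*}
\bigl(V_n^{(2)}\bigr)^{2}-V_{n-r}^{(2)}V_{n+r}^{(2)}=\frac{AB\,(\omega_1\omega_2)^{n-r}(\omega_1^{r}-\omega_2^{r})^{2}}{(\omega_1-\omega_2)^{2}}.
\end{equation*}
Since $\omega_1$ and $\omega_2$ are the non-real cube roots of unity, $\omega_1\omega_2=1$ and $\omega_1+\omega_2=-1$, which gives the clean evaluation $AB=9+6(\omega_1+\omega_2)+4\omega_1\omega_2=7$. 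So the right side collapses to $7\bigl((\omega_1^r-\omega_2^r)/(\omega_1-\omega_2)\bigr)^{2}$, independent of $n$.

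It remains to recognize the quantity $(\omega_1^r-\omega_2^r)/(\omega_1-\omega_2)$ as exactly $U_r^{(2)}=j_{r-1}^{(3)}-J_{r+1}^{(3)}$. This can be verified either by plugging the Binet formulas \eqref{b1}, \eqref{b2} into $j_{r-1}^{(3)}-J_{r+1}^{(3)}$ and using $\omega_1\omega_2=1$ to simplify $\omega_i^{r\pm 1}$, or more efficiently by invoking \eqref{e8}: taking $n=r-1$ shows that $U_r^{(2)}$ is the $3$-periodic sequence $0,1,-1,0,1,-1,\dots$ on $r\equiv 0,1,2\pmod 3$, and a direct check confirms that $(\omega_1^r-\omega_2^r)/(\omega_1-\omega_2)$ takes the same three values on these residues (using $\omega_i^{3}=1$).

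The main obstacle is really the residual identity for $V^{(2)}$, because the bookkeeping in the first paragraph is purely mechanical once one commits to \eqref{h2}. The key algebraic miracle making everything work is the numerical coincidence $AB=7$, which is precisely the scalar $7$ appearing on the right-hand side of the lemma; every other cancellation is forced by the relation $\omega_1\omega_2=1$ among the non-trivial roots of $x^{3}-x^{2}-x-2$.
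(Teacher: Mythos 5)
Your proposal is correct and follows essentially the same route as the paper: reduce via $7J_n^{(3)}=2^{n+1}-V_n^{(2)}$, cancel the $4^{n+1}$ terms to isolate the $2^{n+1}(\cdot)$ factor, and identify $\bigl(V_n^{(2)}\bigr)^2-V_{n-r}^{(2)}V_{n+r}^{(2)}=7\bigl(U_r^{(2)}\bigr)^2$. In fact you supply more detail than the paper does on that last step (the computation $AB=7$ and the matching of $(\omega_1^r-\omega_2^r)/(\omega_1-\omega_2)$ with $U_r^{(2)}$ via Eq. (1.9)), which the paper simply asserts.
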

\begin{proof}
From Eqs. (\ref{h1}) and (\ref{h2}), we obtain
\begin{align*}
\left(J_{n}^{(3)}\right)^{2}&-J_{n-r}^{(3)}J_{n+r}^{(3)}\\
&=\frac{1}{49}\left\lbrace
\begin{array}{c}
\left(2^{n+1}-V_{n}^{(2)}\right)^{2}-\left(2^{n-r+1}-V_{n-r}^{(2)}\right)\left(2^{n+r+1}-V_{n+r}^{(2)}\right)
\end{array}
\right\rbrace\\
&=\frac{1}{49}\left\lbrace
\begin{array}{c}
2^{2(n+1)}-2^{n+2}V_{n}^{(2)}+\left(V_{n}^{(2)}\right)^{2}\\
-2^{2(n+1)}+2^{n-r+1}V_{n+r}^{(2)}+2^{n+r+1}V_{n-r}^{(2)}-V_{n-r}^{(2)}V_{n+r}^{(2)}
\end{array}
\right\rbrace\\
&=\frac{1}{49}\left\lbrace
\begin{array}{c}
2^{n+1}\left(2^{r}V_{n-r}^{(2)}-2V_{n}^{(2)}+2^{-r}V_{n+r}^{(2)}\right)\\
+\left(V_{n}^{(2)}\right)^{2}-V_{n-r}^{(2)}V_{n+r}^{(2)}
\end{array}
\right\rbrace\\
&=\frac{1}{49}\left\lbrace
\begin{array}{c}
2^{n+1}\left(2^{r}V_{n-r}^{(2)}-2V_{n}^{(2)}+2^{-r}V_{n+r}^{(2)}\right)+7\left(U_{r}^{(2)}\right)^{2}
\end{array}
\right\rbrace,
\end{align*}
where $U_{r}^{(2)}=j_{r-1}^{(3)}-J_{r+1}^{(3)}$ using Eq. (\ref{e8}). The proof is completed.
\end{proof}

\begin{theorem}[Gelin-Ces\`aro Identity]\label{thm:1}
For any non-negative integers $n\geq 2$, we have
\begin{equation}
\begin{aligned}
\left(J_{n}^{(3)}\right)^{4}&-J_{n-2}^{(3)}J_{n-1}^{(3)}J_{n+1}^{(3)}J_{n+2}^{(3)}\\
&=\frac{1}{7}\left\lbrace
\begin{array}{c}
\left(J_{n}^{(3)}\right)^{2}\left(2+2^{n-1}\left(3R_{n+2}^{(2)}-2R_{n+1}^{(2)}\right)\right)\\
-\frac{1}{7}\left(1+2^{n-1}\left(3R_{n+2}^{(2)}-2R_{n+1}^{(2)}\right)-3\cdot 2^{2n-1}R_{n+1}^{(2)}R_{n+2}^{(2)}\right)
\end{array}
\right\rbrace,
\end{aligned}
\end{equation}
where $W_{n+2}^{(2)}=\frac{1}{7}\left(5V_{n+1}-3V_{n}^{(2)}\right)$ and $V_{n}^{(2)}$ as in Eq. (\ref{h1}).
\end{theorem}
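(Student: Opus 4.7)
The plan is to reduce the quartic Gelin-Ces\`aro expression to two Catalan-type differences and then invoke Lemma \ref{lem1}. Setting
\[
A_n \;=\; \left(J_{n}^{(3)}\right)^{2}-J_{n-1}^{(3)}J_{n+1}^{(3)},\qquad B_n \;=\; \left(J_{n}^{(3)}\right)^{2}-J_{n-2}^{(3)}J_{n+2}^{(3)},
\]
we have $J_{n-1}^{(3)}J_{n+1}^{(3)} = \left(J_n^{(3)}\right)^2 - A_n$ and $J_{n-2}^{(3)}J_{n+2}^{(3)} = \left(J_n^{(3)}\right)^2 - B_n$, so a direct expansion yields the key algebraic identity
\[
\left(J_{n}^{(3)}\right)^{4}-J_{n-2}^{(3)}J_{n-1}^{(3)}J_{n+1}^{(3)}J_{n+2}^{(3)} \;=\; \left(J_n^{(3)}\right)^{2}(A_n+B_n)-A_n B_n.
\]
This shifts the task from a quartic expression to two already-understood Catalan differences.

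The second step is to evaluate $A_n$ and $B_n$ explicitly. Specializing Lemma \ref{lem1} to $r=1$ and $r=2$, and using the initial data $U_{1}^{(2)}= j_{0}^{(3)}-J_{2}^{(3)}=1$ and $U_{2}^{(2)}= j_{1}^{(3)}-J_{3}^{(3)}=-1$, one obtains $49 A_n$ and $49 B_n$ as $\mathbb{Z}$-linear combinations of $V_{n-2}^{(2)},\ldots,V_{n+2}^{(2)}$ with coefficients that are Laurent polynomials in $2^{n}$. I would then apply the recurrence $V_{m+2}^{(2)} = -V_{m+1}^{(2)}-V_{m}^{(2)}$ from (\ref{im}) to rewrite every $V_{n\pm r}^{(2)}$ in terms of only $V_{n}^{(2)}$ and $V_{n+1}^{(2)}$. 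The two resulting linear forms are precisely the quantities the statement abbreviates as $R_{n+1}^{(2)}$ and $R_{n+2}^{(2)}$ (evidently of the same shape as the $W_{n+2}^{(2)}$ announced at the end of the statement).

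With $A_n$, $B_n$, $A_n+B_n$ and $A_nB_n$ in this closed form, I would substitute into the backbone identity. The factor $\left(J_n^{(3)}\right)^2$ can be kept intact in the first summand, while in the product $A_n B_n$ I would convert a factor of the form $2^{n+1}-V_n^{(2)}$ back into $7J_n^{(3)}$ via (\ref{h2}) so as to collect the mixed terms cleanly. Pulling out an overall $\frac{1}{7}$ should then match the displayed right-hand side.

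The main obstacle is bookkeeping rather than any conceptual leap: one juggles five distinct values $V_{n-2}^{(2)},\ldots,V_{n+2}^{(2)}$ and powers of two ranging from $2^{-2}$ to $2^{2n+2}$, and it is easy to misplace a factor of $7$ or a sign. A secondary difficulty is that the abbreviations $R_{n+1}^{(2)}$ and $R_{n+2}^{(2)}$ used in the theorem are never defined in the excerpt, so the correct identification of these combinations (consistent with the $W_{n+2}^{(2)}$-notation) has to be read off from the simplification itself.
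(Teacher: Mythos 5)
Your plan follows essentially the same route as the paper: the backbone identity $\left(J_{n}^{(3)}\right)^{4}-J_{n-2}^{(3)}J_{n-1}^{(3)}J_{n+1}^{(3)}J_{n+2}^{(3)}=\left(J_{n}^{(3)}\right)^{2}(A_{n}+B_{n})-A_{n}B_{n}$, with $A_{n}$ and $B_{n}$ obtained from Lemma \ref{lem1} at $r=1,2$ (where indeed $(U_{1}^{(2)})^{2}=(U_{2}^{(2)})^{2}=1$) and then reduced via $V_{m+2}^{(2)}=-V_{m+1}^{(2)}-V_{m}^{(2)}$ to the linear forms the paper calls $W_{n+1}^{(2)}$ and $W_{n+2}^{(2)}$. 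You are also correct that the symbol $R^{(2)}$ in the statement is an undefined notation that should read $W^{(2)}$; the paper's own proof uses $W^{(2)}$ throughout.
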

\begin{proof}
For $r=1$ and $r=2$ and Eq. (\ref{im}), we get respectively
\begin{align*}
\left(J_{n}^{(3)}\right)^{2}-J_{n-1}^{(3)}J_{n+1}^{(3)}&=\frac{1}{49}\left(
\begin{array}{c}
2^{n+1}\left(2V_{n-1}^{(2)}-2V_{n}^{(2)}+2^{-1}V_{n+1}^{(2)}\right)+7\left(U_{1}^{(2)}\right)^{2}
\end{array}
\right)\\
&=\frac{1}{49}\left(
\begin{array}{c}
2^{n}\left(4V_{n-1}^{(2)}-4V_{n}^{(2)}+V_{n+1}^{(2)}\right)+7
\end{array}
\right)\\
&=\frac{1}{49}\left(
\begin{array}{c}
7-2^{n}\left(5V_{n}^{(2)}-3V_{n-1}^{(2)}\right)
\end{array}
\right)\\
&=\frac{1}{7}\left(
\begin{array}{c}
1-2^{n}W_{n+1}^{(2)}
\end{array}
\right)
\end{align*}
and
\begin{align*}
\left(J_{n}^{(3)}\right)^{2}-J_{n-2}^{(3)}J_{n+2}^{(3)}&=\frac{1}{49}\left(
\begin{array}{c}
2^{n+1}\left(4V_{n-2}^{(2)}-2V_{n}^{(2)}+2^{-2}V_{n+2}^{(2)}\right)+7\left(U_{2}^{(2)}\right)^{2}
\end{array}
\right)\\
&=\frac{1}{49}\left(
\begin{array}{c}
2^{n-1}\left(16V_{n-2}^{(2)}-8V_{n}^{(2)}+V_{n+2}^{(2)}\right)+7
\end{array}
\right)\\
&=\frac{1}{49}\left(
\begin{array}{c}
7+3\cdot 2^{n-1}\left(5V_{n+1}^{(2)}-3V_{n}^{(2)}\right)
\end{array}
\right)\\
&=\frac{1}{7}\left(
\begin{array}{c}
1+3\cdot 2^{n-1}W_{n+2}^{(2)}
\end{array}
\right)
\end{align*}
 by using Lemma \ref{lem1} and the property $V_{n}^{(2)}=V_{n+3}^{(2)}$ for all $n\geq 0$. Note that the sequence $W_{n}^{(2)}$ satisfies relation $5V_{n+1}^{(2)}-3V_{n}^{(2)}=7W_{n+2}^{(2)}$. So, we obtain
 \begin{align*}
 &J_{n-2}^{(3)}J_{n-1}^{(3)}J_{n+1}^{(3)}J_{n+2}^{(3)}\\
 &=\left( \left(J_{n}^{(3)}\right)^{2}-\frac{1}{7}\left(
\begin{array}{c}
1-2^{n}W_{n+1}^{(2)}
\end{array}
\right)\right)\left( \left(J_{n}^{(3)}\right)^{2}-\frac{1}{7}\left(
\begin{array}{c}
1+3\cdot 2^{n-1}W_{n+2}^{(2)}
\end{array}
\right)\right)\\
&=\left(J_{n}^{(3)}\right)^{4}-\frac{1}{7}\left(J_{n}^{(3)}\right)^{2}\cdot \left(1+3\cdot 2^{n-1}W_{n+2}^{(2)}+1-2^{n}W_{n+1}^{(2)}\right)\\
&\ \ + \frac{1}{49}\left(1-2^{n}W_{n+1}^{(2)}\right)\left(1+3\cdot 2^{n-1}W_{n+2}^{(2)}\right).
\end{align*}
Then, we have
\begin{align*}
 &\left(J_{n}^{(3)}\right)^{4}-J_{n-2}^{(3)}J_{n-1}^{(3)}J_{n+1}^{(3)}J_{n+2}^{(3)}\\
 &=\frac{1}{7}\left\lbrace
\begin{array}{c}
\left(J_{n}^{(3)}\right)^{2}\left(2+2^{n-1}\left(3W_{n+2}^{(2)}-2W_{n+1}^{(2)}\right)\right)\\
-\frac{1}{7}\left(1+2^{n-1}\left(3W_{n+2}^{(2)}-2W_{n+1}^{(2)}\right)-3\cdot 2^{2n-1}W_{n+1}^{(2)}W_{n+2}^{(2)}\right)
\end{array}
\right\rbrace\\
&=\frac{1}{7}\left\{ 
\begin{array}{ccc}
\left\lbrace
\begin{array}{c}
\left(J_{n}^{(3)}\right)^{2}\left(2-11\cdot 2^{n-1}\right)\\
-\frac{1}{7}\left(1-11\cdot 2^{n-1}+9\cdot 2^{2n-1}\right)
\end{array}
\right\rbrace, & \textrm{if} & \mymod{n}{0}{3} \\ 
\left\lbrace
\begin{array}{c}
\left(J_{n}^{(3)}\right)^{2}\left(2+12\cdot 2^{n-1}\right)\\
-\frac{1}{7}\left(1+12\cdot 2^{n-1}+18\cdot 2^{2n-1}\right)
\end{array}
\right\rbrace,  & \textrm{if} & \mymod{n}{1}{3} \\ 
\left\lbrace
\begin{array}{c}
\left(J_{n}^{(3)}\right)^{2}\left(2-2^{n-1}\right)\\
-\frac{1}{7}\left(1-2^{n-1}-6\cdot 2^{2n-1}\right)
\end{array}
\right\rbrace, & \textrm{if} & \mymod{n}{2}{3}
\end{array}%
\right. .
\end{align*}
The proof is completed.
\end{proof}

\section{The second family of generalized third-order Jacobsthal sequences}
\setcounter{equation}{0}

Here, we define a new generalization of the third-order Jacobsthal sequence $\{J_{n}^{(3)}\}_{n\geq0}$. Let us denote this sequence by $\{\mathbb{J}_{n}^{(3)}\}_{n\geq0}$ which is defined recursively by
\begin{equation}\label{n1}
\left\lbrace 
\begin{array}{c}
\mathbb{J}_{n+3}^{(3)}=\mathbb{J}_{n+2}^{(3)}+\mathbb{J}_{n+1}^{(3)}+2\mathbb{J}_{n}^{(3)},\ n\geq 0,\\
\mathbb{J}_{0}^{(3)}=a,\ \mathbb{J}_{1}^{(3)}=b,\ \mathbb{J}_{2}^{(3)}=c,
\end{array}
\right .
\end{equation}
where $a$, $b$ and $c$ are real numbers. For example, the first seven terms of the sequence are $$\{a,\ b,\ c,\ 2a+b+c,\ 2a+3b+2c,\ 4a+4b+5c,\ 10a+9b+9c\}.$$
For $a=0$ and $b=c=1$, we get the ordinary third-order Jacobsthal sequence. Also, when $a=2$, $b=1$ and $c=5$, we get the third-order Jacobsthal-Lucas sequence which is defined in \cite{Cook-Bac}.

In this study, first we obtain the generating function and then Binet's formula for the sequence $\{\mathbb{J}_{n}^{(3)}\}_{n\geq0}$. Finally, we show some properties, for example, the Catalan and Gelin-Ces\`aro identity are satisfied by this sequence.

Now, we can give the generating function of the sequence.
\begin{theorem}[Generating function]\label{thm:2}
The generating function for the sequence $\{\mathbb{J}_{n}^{(3)}\}_{n\geq0}$ is
\begin{equation}
F_{\mathbb{J}}(t)=\frac{a+(b-a)t+(c-b-a)t^{2}}{1-t-t^{2}-2t^{3}}.
\end{equation}
\end{theorem}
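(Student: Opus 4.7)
The plan is the standard generating-function argument for a linear recurrence with constant coefficients. Define the formal power series
$$F_{\mathbb{J}}(t)=\sum_{n\geq 0}\mathbb{J}_{n}^{(3)}t^{n},$$
and note that multiplication by $t$, $t^{2}$, and $2t^{3}$ merely shifts the indexing. The idea is to form the product $(1-t-t^{2}-2t^{3})F_{\mathbb{J}}(t)$, at which point the recurrence (\ref{n1}) will annihilate almost every coefficient, leaving a polynomial numerator of degree at most $2$.

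Concretely, I would expand
$$(1-t-t^{2}-2t^{3})F_{\mathbb{J}}(t)=\sum_{n\geq 0}\mathbb{J}_{n}^{(3)}t^{n}-\sum_{n\geq 1}\mathbb{J}_{n-1}^{(3)}t^{n}-\sum_{n\geq 2}\mathbb{J}_{n-2}^{(3)}t^{n}-2\sum_{n\geq 3}\mathbb{J}_{n-3}^{(3)}t^{n}$$
and collect coefficients. For each $n\geq 3$ the coefficient of $t^{n}$ is $\mathbb{J}_{n}^{(3)}-\mathbb{J}_{n-1}^{(3)}-\mathbb{J}_{n-2}^{(3)}-2\mathbb{J}_{n-3}^{(3)}$, which vanishes by (\ref{n1}). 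Only the three boundary terms survive: the constant term is $\mathbb{J}_{0}^{(3)}=a$, the coefficient of $t$ is $\mathbb{J}_{1}^{(3)}-\mathbb{J}_{0}^{(3)}=b-a$, and the coefficient of $t^{2}$ is $\mathbb{J}_{2}^{(3)}-\mathbb{J}_{1}^{(3)}-\mathbb{J}_{0}^{(3)}=c-b-a$. Dividing both sides by $1-t-t^{2}-2t^{3}$ yields the claimed closed form.

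The main (and essentially only) point of care is the bookkeeping for these three boundary coefficients, where the shifted series $t^{k}F_{\mathbb{J}}(t)$ for $k=1,2,3$ do not yet contribute in full; beyond that, the calculation is mechanical. The identity holds as an equality of formal power series in $t$, and may afterwards be interpreted analytically on any disk on which $1-t-t^{2}-2t^{3}\neq 0$.
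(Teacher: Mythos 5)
Your proposal is correct and follows essentially the same route as the paper: multiply $F_{\mathbb{J}}(t)$ by $1-t-t^{2}-2t^{3}$, note that the recurrence annihilates every coefficient of $t^{n}$ for $n\geq 3$, and read off the three surviving boundary terms $a$, $b-a$, $c-b-a$. The only cosmetic difference is that you use shifted summation notation where the paper writes out the four series term by term.
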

\begin{proof}
Let $F_{\mathbb{J}}(t)=\mathbb{J}_{0}^{(3)}+\mathbb{J}_{1}^{(3)}t+\mathbb{J}_{2}^{(3)}t^{2}+\cdots =\sum_{n=0}^{\infty}\mathbb{J}_{n}^{(3)}t^{n}$, which is the formal power series of the generating function for $\{\mathbb{J}_{n}^{(3)}\}$. We obtain that
\begin{align*}
(1-t-t^{2}-2t^{3})F_{\mathbb{J}}(t)&=\mathbb{J}_{0}^{(3)}+\mathbb{J}_{1}^{(3)}t+\mathbb{J}_{2}^{(3)}t^{2}+\cdots \\
&\ \ -\mathbb{J}_{0}^{(3)}t-\mathbb{J}_{1}^{(3)}t^{2}-\mathbb{J}_{2}^{(3)}t^{3}-\cdots \\
&\ \ -\mathbb{J}_{0}^{(3)}t^{2}-\mathbb{J}_{1}^{(3)}t^{3}-\mathbb{J}_{2}^{(3)}t^{4}-\cdots \\
&\ \ -2\mathbb{J}_{0}^{(3)}t^{3}-2\mathbb{J}_{1}^{(3)}t^{4}-2\mathbb{J}_{2}^{(3)}t^{5}-\cdots \\
&=\mathbb{J}_{0}^{(3)}+(\mathbb{J}_{1}^{(3)}-\mathbb{J}_{0}^{(3)})t+(\mathbb{J}_{2}^{(3)}-\mathbb{J}_{1}^{(3)}-\mathbb{J}_{0}^{(3)})t^{2},
\end{align*}
since $\mathbb{J}_{n+3}^{(3)}=\mathbb{J}_{n+2}^{(3)}+\mathbb{J}_{n+1}^{(3)}+2\mathbb{J}_{n}^{(3)}$, $n\geq 0$ and the coefficients of $t^{n}$ for $n\geq 3$ are equal with zero. Then, the theorem is proved.
\end{proof}

In fact, we can give Binet's formula for the sequence as follows.
\begin{theorem}\label{thm:3}
For $n\geq 0$, we have
\begin{equation}\label{p1}
\begin{aligned}
\mathbb{J}_{n}^{(3)}&=\left(\frac{c+b+a}{(2-\omega_{1})(2-\omega_{2})}\right)2^{n}-\left(\frac{c-(2+\omega_{2})b+2\omega_{2}a}{(2-\omega_{1})(\omega_{1}-\omega_{2})}\right)\omega_{1}^{n}\\
&\ \ + \left(\frac{c-(2+\omega_{1})b+2\omega_{1}a}{(2-\omega_{2})(\omega_{1}-\omega_{2})}\right)\omega_{2}^{n}.
\end{aligned}
\end{equation}
\end{theorem}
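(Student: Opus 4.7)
The plan is to combine Theorem \ref{thm:2} with a partial fraction decomposition. Since the characteristic polynomial $x^{3} - x^{2} - x - 2$ factors as $(x-2)(x-\omega_{1})(x-\omega_{2})$, the substitution $x = 1/t$ followed by multiplication by $t^{3}$ shows that the denominator of $F_{\mathbb{J}}(t)$ factors as
$$1 - t - t^{2} - 2t^{3} = (1-2t)(1-\omega_{1} t)(1-\omega_{2} t).$$
Consequently $F_{\mathbb{J}}(t)$ admits a unique decomposition of the form $\frac{A}{1-2t} + \frac{B}{1-\omega_{1} t} + \frac{C}{1-\omega_{2} t}$, and expanding each summand as a geometric series immediately yields $\mathbb{J}_{n}^{(3)} = A\cdot 2^{n} + B\,\omega_{1}^{n} + C\,\omega_{2}^{n}$, which is precisely the Binet shape asserted by the theorem.

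Next I would compute $A$, $B$, $C$ by the standard residue recipe: $A = (1-2t)F_{\mathbb{J}}(t)\big|_{t=1/2}$, and $B$, $C$ analogously at $t = 1/\omega_{1}$, $t = 1/\omega_{2}$. For $A$ this produces, after clearing the common factor of $1/4$, the value $\frac{a+b+c}{(2-\omega_{1})(2-\omega_{2})}$, matching the first coefficient in the statement. For $B$ the residue gives the raw expression
$$B \;=\; \frac{a\omega_{1}^{2} + (b-a)\omega_{1} + (c - b - a)}{(\omega_{1} - 2)(\omega_{1} - \omega_{2})},$$
and using $\omega_{1}^{2} = \omega_{2}$ (which follows from $\omega_{1}\omega_{2} = 1$ together with $\omega_{1}^{3} = 1$), along with $\omega_{1} + \omega_{2} = -1$, the numerator reorganises to exactly $-\bigl(c - (2+\omega_{2})b + 2\omega_{2} a\bigr)$, giving the middle coefficient of the theorem. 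The coefficient $C$ is handled symmetrically.

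An equivalent route is to bypass the generating function altogether: start from the Binet ansatz $\mathbb{J}_{n}^{(3)} = \alpha\cdot 2^{n} + \beta\omega_{1}^{n} + \gamma\omega_{2}^{n}$, justified by the three distinct characteristic roots, and solve the $3\times 3$ Vandermonde system produced by $n = 0, 1, 2$ via Cramer's rule. In either approach, the only real obstacle is cosmetic: repackaging the raw residues (or the Cramer quotients) into the tidy symmetric form $c - (2+\omega_{j})b + 2\omega_{j} a$. This is routine bookkeeping using $\omega_{1} + \omega_{2} = -1$, $\omega_{1}\omega_{2} = 1$, $(2-\omega_{1})(2-\omega_{2}) = 7$, and $\omega_{j}^{3} = 1$, with no genuine analytic difficulty.
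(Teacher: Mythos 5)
Your proposal is correct in substance, and your primary route is genuinely different from the paper's. The paper proves Theorem \ref{thm:3} by positing the ansatz $\mathbb{J}_{n}^{(3)}=A_{\mathbb{J}}2^{n}+B_{\mathbb{J}}\omega_{1}^{n}+C_{\mathbb{J}}\omega_{2}^{n}$ (justified by the three distinct characteristic roots) and solving the $3\times 3$ system obtained from the initial conditions at $n=0,1,2$ --- that is, exactly the ``equivalent route'' you sketch at the end. Your main argument instead derives the Binet form from the generating function of Theorem \ref{thm:2} by factoring $1-t-t^{2}-2t^{3}=(1-2t)(1-\omega_{1}t)(1-\omega_{2}t)$ and applying partial fractions; this gives you the shape $A\cdot 2^{n}+B\omega_{1}^{n}+C\omega_{2}^{n}$ without appealing to the general theory of linear recurrences, at the cost of residue computations that are the same Vandermonde algebra in disguise. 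Both are standard and both work. One bookkeeping correction: the numerator of your raw $B$ simplifies to $+\bigl(c-(2+\omega_{2})b+2\omega_{2}a\bigr)$, not its negative; indeed $a\omega_{2}+(b-a)\omega_{1}+(c-b-a)=c+(\omega_{1}-1)b+(\omega_{2}-\omega_{1}-1)a=c-(2+\omega_{2})b+2\omega_{2}a$, using $1+\omega_{1}+\omega_{2}=0$. The minus sign in the theorem's middle coefficient is supplied instead by rewriting your denominator $(\omega_{1}-2)(\omega_{1}-\omega_{2})$ as $-(2-\omega_{1})(\omega_{1}-\omega_{2})$; flipping both the numerator and the denominator, as your wording suggests, would cancel the signs and give the wrong result. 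With that sign traced correctly the coefficients match the statement, and $C$ follows by the symmetric computation.
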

\begin{proof}
The solution of Eq. (\ref{n1}) is
\begin{equation}\label{e13}
\mathbb{J}_{n}^{(3)}=A_{\mathbb{J}}2^{n}+B_{\mathbb{J}}\omega_{1}^{n}+C_{\mathbb{J}}\omega_{2}^{n}.
\end{equation}
Then, let $\mathbb{J}_{0}^{(3)}=A_{\mathbb{J}}+B_{\mathbb{J}}+C_{\mathbb{J}}$, $\mathbb{J}_{1}^{(3)}=2A_{\mathbb{J}}+B_{\mathbb{J}}\omega_{1}+C_{\mathbb{J}}\omega_{2}$ and $\mathbb{J}_{2}^{(3)}=4A_{\mathbb{J}}+B_{\mathbb{J}}\omega_{1}^{2}+C_{\mathbb{J}}\omega_{2}^{2}$. Therefore, we have $(2-\omega_{1})(2-\omega_{2})A_{\mathbb{J}}=c-(\omega_{1}+\omega_{2})b+\omega_{1}\omega_{2}a$, $(2-\omega_{1})(\omega_{1}-\omega_{2})B_{\mathbb{J}}=c-(2+\omega_{2})b+2\omega_{2}a$, $(2-\omega_{2})(\omega_{1}-\omega_{2})C_{\mathbb{J}}=c-(2+\omega_{1})b+2\omega_{1}a$. Using $A_{\mathbb{J}}$, $B_{\mathbb{J}}$ and $C_{\mathbb{J}}$ in Eq. (\ref{e13}), we obtain
\begin{align*}
\mathbb{J}_{n}^{(3)}&=\left(\frac{c+b+a}{(2-\omega_{1})(2-\omega_{2})}\right)2^{n}-\left(\frac{c-(2+\omega_{2})b+2\omega_{2}a}{(2-\omega_{1})(\omega_{1}-\omega_{2})}\right)\omega_{1}^{n}\\
&\ \ + \left(\frac{c-(2+\omega_{1})b+2\omega_{1}a}{(2-\omega_{2})(\omega_{1}-\omega_{2})}\right)\omega_{2}^{n}.
\end{align*}
The proof is completed.
\end{proof}

\begin{theorem}\label{thm:4}
Assume that $x\neq 0$. We obtain,
\begin{equation}\label{p5}
\sum_{k=0}^{n}\frac{\mathbb{J}_{k}^{(3)}}{x^{k}}=\frac{1}{x^{n}\nu(x)}\left\lbrace
\begin{array}{c}
2\mathbb{J}_{n}^{(3)}+\left(\mathbb{J}_{n+2}^{(3)}-\mathbb{J}_{n+1}^{(3)}\right)x+\mathbb{J}_{n+1}^{(3)}x^{2}\\
-x^{n+1}\left\lbrace \begin{array}{c} c-b-a -\left(a-b\right)x+ax^{2}\end{array}\right\rbrace
\end{array}
\right\rbrace,
\end{equation}
where $\nu(x)=x^{3}-x^{2}-x-2$.
\end{theorem}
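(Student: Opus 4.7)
The plan is to exploit a telescoping effect by multiplying $S_{n}(x):=\sum_{k=0}^{n}\mathbb{J}_{k}^{(3)}/x^{k}$ by the polynomial $\nu(x)/x^{3}=1-x^{-1}-x^{-2}-2x^{-3}$, whose coefficients are precisely those of the characteristic relation $\mathbb{J}_{k}^{(3)}-\mathbb{J}_{k-1}^{(3)}-\mathbb{J}_{k-2}^{(3)}-2\mathbb{J}_{k-3}^{(3)}=0$. First I would write out the four shifted series $S_{n}(x)$, $x^{-1}S_{n}(x)$, $x^{-2}S_{n}(x)$, $2x^{-3}S_{n}(x)$, combine them with signs $+,-,-,-$, and collect coefficients according to the power of $1/x$.

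Three regimes emerge. At the low end, the coefficients of $1$, $x^{-1}$, $x^{-2}$ involve only the initial data and produce $a+(b-a)x^{-1}+(c-b-a)x^{-2}$. In the interior, for $3\le k\le n$, the coefficient of $x^{-k}$ is exactly $\mathbb{J}_{k}^{(3)}-\mathbb{J}_{k-1}^{(3)}-\mathbb{J}_{k-2}^{(3)}-2\mathbb{J}_{k-3}^{(3)}$, which vanishes by (3.1). At the high end, the coefficients of $x^{-(n+1)}$, $x^{-(n+2)}$, $x^{-(n+3)}$ collect the unmatched tail pieces $-(\mathbb{J}_{n}^{(3)}+\mathbb{J}_{n-1}^{(3)}+2\mathbb{J}_{n-2}^{(3)})$, $-(\mathbb{J}_{n}^{(3)}+2\mathbb{J}_{n-1}^{(3)})$, and $-2\mathbb{J}_{n}^{(3)}$.

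Next I would apply the recurrence once more at the upper boundary, using $\mathbb{J}_{n+1}^{(3)}=\mathbb{J}_{n}^{(3)}+\mathbb{J}_{n-1}^{(3)}+2\mathbb{J}_{n-2}^{(3)}$ and $\mathbb{J}_{n+2}^{(3)}-\mathbb{J}_{n+1}^{(3)}=\mathbb{J}_{n}^{(3)}+2\mathbb{J}_{n-1}^{(3)}$, to compress the three tail coefficients into $-\mathbb{J}_{n+1}^{(3)}$, $-(\mathbb{J}_{n+2}^{(3)}-\mathbb{J}_{n+1}^{(3)})$, and $-2\mathbb{J}_{n}^{(3)}$. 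Finally, dividing by $\nu(x)/x^{3}$, clearing an overall factor of $x^{-n}$, and regrouping the low-end boundary into the compact polynomial $x^{n+1}\{c-b-a-(a-b)x+ax^{2}\}$ yields the claimed closed form.

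The main obstacle is the high-end bookkeeping: four shifted series interact over the last several indices, and the decisive step is recognizing that the surviving tail combinations fold back into $\mathbb{J}_{n+1}^{(3)}$ and $\mathbb{J}_{n+2}^{(3)}-\mathbb{J}_{n+1}^{(3)}$ by applying (3.1) in reverse. For small $n$ the interior sum is empty, so the base cases $n=0,1,2$ should be verified by direct substitution of the initial values $a,b,c$ to confirm that the formula absorbs the boundary degeneracies correctly.
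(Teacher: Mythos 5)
Your route is genuinely different from the paper's. The paper proves Theorem \ref{thm:4} by inserting the Binet decomposition $\mathbb{J}_{k}^{(3)}=A_{\mathbb{J}}2^{k}-B_{\mathbb{J}}\omega_{1}^{k}+C_{\mathbb{J}}\omega_{2}^{k}$, summing three geometric series, and recombining over the common denominator $x^{n}(2-x)(\omega_{1}-x)(\omega_{2}-x)$, whereas you multiply the partial sum by $1-x^{-1}-x^{-2}-2x^{-3}$ and telescope directly against the recurrence. Your version avoids the complex roots altogether, and your bookkeeping is right: the low end gives $a+(b-a)x^{-1}+(c-b-a)x^{-2}$, the interior coefficients vanish by the recurrence, and the tail coefficients fold back into $-\mathbb{J}_{n+1}^{(3)}$, $-(\mathbb{J}_{n+2}^{(3)}-\mathbb{J}_{n+1}^{(3)})$ and $-2\mathbb{J}_{n}^{(3)}$ exactly as you describe.

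The one problem is your closing assertion that this ``yields the claimed closed form.'' Carried honestly to the end, your computation gives
\begin{equation*}
\sum_{k=0}^{n}\frac{\mathbb{J}_{k}^{(3)}}{x^{k}}=\frac{1}{x^{n}\nu(x)}\Bigl\{x^{n+1}\bigl(ax^{2}+(b-a)x+(c-b-a)\bigr)-\bigl(\mathbb{J}_{n+1}^{(3)}x^{2}+(\mathbb{J}_{n+2}^{(3)}-\mathbb{J}_{n+1}^{(3)})x+2\mathbb{J}_{n}^{(3)}\bigr)\Bigr\},
\end{equation*}
which is the \emph{negative} of the right-hand side of (\ref{p5}). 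The discrepancy is not a defect of your method: the printed statement carries a global sign error, which the paper's own proof introduces by writing the common denominator as $x^{n}\nu(x)$ even though $(2-x)(\omega_{1}-x)(\omega_{2}-x)=-\nu(x)$. Indeed the sanity check you yourself propose exposes it: at $n=0$ the right-hand side of (\ref{p5}) has numerator $2a+ax+ax^{2}-ax^{3}=-a\nu(x)$, so it evaluates to $-a$ while the left-hand side is $a$. So do run the base cases as planned, and state the identity with the corrected sign; with that adjustment your telescoping argument is complete and arguably cleaner than the paper's.
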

\begin{proof}
From Theorem \ref{thm:3}, we have
\begin{align*}
\sum_{k=0}^{n}\frac{\mathbb{J}_{k}^{(3)}}{x^{k}}&=\left(\frac{c+b+a}{(2-\omega_{1})(2-\omega_{2})}\right)\sum_{k=0}^{n}\left(\frac{2}{x}\right)^{k}\\
&\ \ - \left(\frac{c-(2+\omega_{2})b+2\omega_{2}a}{(2-\omega_{1})(\omega_{1}-\omega_{2})}\right)\sum_{k=0}^{n}\left(\frac{\omega_{1}}{x}\right)^{k}\\
&\  \ + \left(\frac{c-(2+\omega_{1})b+2\omega_{1}a}{(2-\omega_{2})(\omega_{1}-\omega_{2})}\right)\sum_{k=0}^{n}\left(\frac{\omega_{2}}{x}\right)^{k}.
\end{align*}
By considering the definition of a geometric sequence, we get
\begin{align*}
\sum_{k=0}^{n}\frac{\mathbb{J}_{k}^{(3)}}{x^{k}}&=\left(\frac{c+b+a}{(2-\omega_{1})(2-\omega_{2})}\right)\frac{2^{n+1}-x^{n+1}}{x^{n}(2-x)}\\
&\ \ - \left(\frac{c-(2+\omega_{2})b+2\omega_{2}a}{(2-\omega_{1})(\omega_{1}-\omega_{2})}\right)\frac{\omega_{1}^{n+1}-x^{n+1}}{x^{n}(\omega_{1}-x)}\\
&\  \ + \left(\frac{c-(2+\omega_{1})b+2\omega_{1}a}{(2-\omega_{2})(\omega_{1}-\omega_{2})}\right)\frac{\omega_{2}^{n+1}-x^{n+1}}{x^{n}(\omega_{2}-x)}\\
&=\frac{1}{x^{n}\nu(x)}\left\lbrace
\begin{array}{c}
A_{\mathbb{J}}(2^{n+1}-x^{n+1})(\omega_{1}-x)(\omega_{2}-x)\\
- B_{\mathbb{J}}(\omega_{1}^{n+1}-x^{n+1})(2-x)(\omega_{2}-x)\\
+ C_{\mathbb{J}}(\omega_{2}^{n+1}-x^{n+1})(2-x)(\omega_{1}-x)
\end{array}
\right\rbrace,
\end{align*}
where 
\begin{equation}\label{abc}
\left\lbrace
\begin{array}{c}
A_{\mathbb{J}}=\frac{c+b+a}{(2-\omega_{1})(2-\omega_{2})},\ B_{\mathbb{J}}=\frac{c-(2+\omega_{2})b+2\omega_{2}a}{(2-\omega_{1})(\omega_{1}-\omega_{2})},\ 
C_{\mathbb{J}}=\frac{c-(2+\omega_{1})b+2\omega_{1}a}{(2-\omega_{2})(\omega_{1}-\omega_{2})}
\end{array}
\right.
\end{equation}
and $\nu(x)=x^{3}-x^{2}-x-2$. Using $\omega_{1}+\omega_{2}=-1$ and $\omega_{1}\omega_{2}=1$, if we rearrange the last equality, then we obtain
\begin{align*}
\sum_{k=0}^{n}\frac{\mathbb{J}_{k}^{(3)}}{x^{k}}&=\frac{1}{x^{n}\nu(x)}\left\lbrace
\begin{array}{c}
A_{\mathbb{J}}(2^{n+1}-x^{n+1})(1+x+x^{2})\\
- B_{\mathbb{J}}(\omega_{1}^{n+1}-x^{n+1})(2\omega_{2}-(2+\omega_{2})x+x^{2})\\
+ C_{\mathbb{J}}(\omega_{2}^{n+1}-x^{n+1})(2\omega_{1}-(2+\omega_{1})x+x^{2})
\end{array}
\right\rbrace\\
&=\frac{1}{x^{n}\nu(x)}\left\lbrace
\begin{array}{c}
A_{\mathbb{J}}2^{n+1}(1+x+x^{2})\\
- B_{\mathbb{J}}\omega_{1}^{n+1}(2\omega_{2}-(2+\omega_{2})x+x^{2})\\
+ C_{\mathbb{J}}\omega_{2}^{n+1}(2\omega_{1}-(2+\omega_{1})x+x^{2})\\
-x^{n+1}\left\lbrace \begin{array}{c} A_{\mathbb{J}}(1+x+x^{2})\\
- B_{\mathbb{J}}(2\omega_{2}-(2+\omega_{2})x+x^{2})\\
+ C_{\mathbb{J}}(2\omega_{1}-(2+\omega_{1})x+x^{2})
\end{array}\right\rbrace
\end{array}
\right\rbrace.
\end{align*}
So, the proof is completed.
\end{proof}

In the following theorem, we give the sum of generalized third-order Jacobsthal sequence corresponding to different indices.
\begin{theorem}\label{thm:5}
For $r\geq m$, we have
\begin{equation}\label{p7}
\sum_{k=0}^{n}\mathbb{J}_{mk+r}^{(3)}=\frac{1}{\sigma_{n}}\left\lbrace
\begin{array}{c}
\mathbb{J}_{m(n+1)+r}^{(3)}-\mathbb{J}_{r}^{(3)}+2^{m}\mathbb{J}_{mn+r}^{(3)}-2^{m}\mathbb{J}_{r-m}^{(3)}\\
-\mathbb{J}_{m(n+1)+r}^{(3)}\mu(m)+\mathbb{J}_{r}^{(3)}\mu(m)\\
+\mathbb{J}_{m(n+2)+r}^{(3)}-\mathbb{J}_{r+m}^{(3)}
\end{array}
\right\rbrace,
\end{equation}
where $\sigma_{n}=2^{m+1}+(1-2^{m})(\omega_{1}^{m}+\omega_{2}^{m})-2$ and $\mu(m)=2^{m}+\omega_{1}^{m}+\omega_{2}^{m}$.
\end{theorem}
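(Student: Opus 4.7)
The plan is to attack the identity through the Binet formula (\ref{p1}) from Theorem \ref{thm:3}. Writing $\mathbb{J}_{mk+r}^{(3)}=A_{\mathbb{J}}2^{mk+r}-B_{\mathbb{J}}\omega_{1}^{mk+r}+C_{\mathbb{J}}\omega_{2}^{mk+r}$ with the constants (\ref{abc}) and summing over $k$ splits the left-hand side into three geometric series with common ratios $2^{m}$, $\omega_{1}^{m}$ and $\omega_{2}^{m}$ respectively. Applying the standard geometric sum formula yields
\[
\sum_{k=0}^{n}\mathbb{J}_{mk+r}^{(3)}=A_{\mathbb{J}}\cdot\frac{2^{m(n+1)+r}-2^{r}}{2^{m}-1}-B_{\mathbb{J}}\cdot\frac{\omega_{1}^{m(n+1)+r}-\omega_{1}^{r}}{\omega_{1}^{m}-1}+C_{\mathbb{J}}\cdot\frac{\omega_{2}^{m(n+1)+r}-\omega_{2}^{r}}{\omega_{2}^{m}-1}.
\]

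Next I would bring these three fractions to the common denominator $(2^{m}-1)(\omega_{1}^{m}-1)(\omega_{2}^{m}-1)$. Using $\omega_{1}+\omega_{2}=-1$ and $\omega_{1}\omega_{2}=1$ (the roots of $x^{2}+x+1$), direct expansion gives
\[
(2^{m}-1)(\omega_{1}^{m}-1)(\omega_{2}^{m}-1)=(2^{m}-1)\bigl(2-(\omega_{1}^{m}+\omega_{2}^{m})\bigr)=2^{m+1}+(1-2^{m})(\omega_{1}^{m}+\omega_{2}^{m})-2,
\]
which is exactly $\sigma_{n}$. This confirms the denominator on the right-hand side of (\ref{p7}).

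For the numerator, I would apply the Binet formula in reverse on each of the six shifts $\mathbb{J}_{r}^{(3)}$, $\mathbb{J}_{r\pm m}^{(3)}$, $\mathbb{J}_{mn+r}^{(3)}$, $\mathbb{J}_{m(n+1)+r}^{(3)}$, $\mathbb{J}_{m(n+2)+r}^{(3)}$ that appear on the right-hand side. Introducing $T_{\alpha}=\alpha^{r}(\alpha^{m(n+1)}-1)$ for $\alpha\in\{2,\omega_{1},\omega_{2}\}$, the coefficient of $A_{\mathbb{J}}T_{2}$ in the candidate numerator works out to $(1-\mu(m))+1+2^{m}=2-(\omega_{1}^{m}+\omega_{2}^{m})$, which equals $\sigma_{n}/(2^{m}-1)$ and matches the $A_{\mathbb{J}}$ contribution from the geometric sum. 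The analogous coefficients for $-B_{\mathbb{J}}T_{\omega_{1}}$ and $C_{\mathbb{J}}T_{\omega_{2}}$ reduce, via the simplification $2^{m}/\omega_{1}^{m}=2^{m}\omega_{2}^{m}$ (and its conjugate), to $\sigma_{n}/(\omega_{1}^{m}-1)$ and $\sigma_{n}/(\omega_{2}^{m}-1)$ respectively.

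The main obstacle is the bookkeeping at the last step: one must carefully track the $A_{\mathbb{J}}$, $B_{\mathbb{J}}$, $C_{\mathbb{J}}$ parts of all six generalized Jacobsthal terms on the right-hand side and verify that the weights $1$, $2^{m}$ and $\mu(m)$ combine them into exactly the coefficients of $T_{2}$, $T_{\omega_{1}}$, $T_{\omega_{2}}$ produced by the geometric sums. Once the three coefficient identities have been verified component by component, (\ref{p7}) follows by equating the two sides, which completes the proof.
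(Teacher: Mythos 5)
Your proposal is correct and follows essentially the same route as the paper: expand via the Binet formula, sum the three geometric series with ratios $2^{m}$, $\omega_{1}^{m}$, $\omega_{2}^{m}$, combine over the common denominator $(2^{m}-1)(\omega_{1}^{m}-1)(\omega_{2}^{m}-1)=\sigma_{n}$, and reassemble the numerator into shifted $\mathbb{J}$-terms. Your coefficient check (e.g.\ $1-\mu(m)+1+2^{m}=2-(\omega_{1}^{m}+\omega_{2}^{m})$ for the $A_{\mathbb{J}}$ part) is exactly the ``after some algebra'' step the paper leaves implicit.
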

\begin{proof}
 Let us take $A_{\mathbb{J}}$, $B_{\mathbb{J}}$ and $C_{\mathbb{J}}$ in Eq. (\ref{abc}). Then, we write
\begin{align*}
\sum_{k=0}^{n}\mathbb{J}_{mk+r}^{(3)}&=A_{\mathbb{J}}2^{r}\sum_{k=0}^{n}2^{mk}-B_{\mathbb{J}}\omega_{1}^{r}\sum_{k=0}^{n}\omega_{1}^{mk}+C_{\mathbb{J}}\omega_{2}^{r}\sum_{k=0}^{n}\omega_{2}^{mk}\\
&=A_{\mathbb{J}}2^{r}\left(\frac{2^{m(n+1)}-1}{2^{m}-1}\right)-B_{\mathbb{J}}\omega_{1}^{r}\left(\frac{\omega_{1}^{m(n+1)}-1}{\omega_{1}^{m}-1}\right)\\
&\ \ +C_{\mathbb{J}}\omega_{2}^{r}\left(\frac{\omega_{2}^{m(n+1)}-1}{\omega_{2}^{m}-1}\right)\\
&=\frac{1}{\sigma_{n}}\left\lbrace
\begin{array}{c}
A_{\mathbb{J}}\left(2^{m(n+1)+r}-2^{r}\right)\left(\omega_{1}^{m}\omega_{2}^{m}-(\omega_{1}^{m}+\omega_{2}^{m})+1\right)\\
- B_{\mathbb{J}}\left(\omega_{1}^{m(n+1)+r}-\omega_{1}^{r}\right)\left(2^{m}\omega_{2}^{m}-(2^{m}+\omega_{2}^{m})+1\right)\\
+ C_{\mathbb{J}}\left(\omega_{2}^{m(n+1)+r}-\omega_{2}^{r}\right)\left(2^{m}\omega_{1}^{m}-(2^{m}+\omega_{1}^{m})+1\right)
\end{array}
\right\rbrace,
\end{align*}
where $\sigma_{n}=2^{m+1}+(1-2^{m})(\omega_{1}^{m}+\omega_{2}^{m})-2$. After some algebra, we obtain
$$\sum_{k=0}^{n}\mathbb{J}_{mk+r}^{(3)}=\frac{1}{\sigma_{n}}\left\lbrace
\begin{array}{c}
\mathbb{J}_{m(n+1)+r}^{(3)}-\mathbb{J}_{r}^{(3)}+2^{m}\mathbb{J}_{mn+r}^{(3)}-2^{m}\mathbb{J}_{r-m}^{(3)}\\
-\mathbb{J}_{m(n+1)+r}^{(3)}\mu(m)+\mathbb{J}_{r}^{(3)}\mu(m)\\
+\mathbb{J}_{m(n+2)+r}^{(3)}-\mathbb{J}_{r+m}^{(3)}
\end{array}
\right\rbrace,$$
where $\mu(m)=2^{m}+\omega_{1}^{m}+\omega_{2}^{m}$. The proof is completed.
\end{proof}

\section{Main results}
\setcounter{equation}{0}

We use the next notation for the Binet formula of generalized third-order Jacobsthal sequence $\mathbb{J}_{n}^{(3)}$. Let 
\begin{equation}\label{h10}
\mathbb{V}_{n}^{(2)}=\frac{A\omega_{1}^{n}-B\omega_{2}^{n}}{\omega_{1}-\omega_{2}}=\left\{ 
\begin{array}{ccc}
c+b-6a & \textrm{if} & \mymod{n}{0}{3} \\ 
2c-5b+2a & \textrm{if} & \mymod{n}{1}{3} \\ 
-3c+4b+4a& \textrm{if} & \mymod{n}{2}{3}
\end{array}
\right. ,
\end{equation}
where $A=(2-\omega_{2})(c-(2+\omega_{2})b+2\omega_{2}a)$ and $B=(2-\omega_{1})(c-(2+\omega_{1})b+2\omega_{1}a)$. Furthermore, note that for all $n\geq0$ we have 
\begin{equation}\label{im2}
\mathbb{V}_{n+2}^{(2)}=-\mathbb{V}_{n+1}^{(2)}-\mathbb{V}_{n}^{(2)},\ \mathbb{V}_{0}^{(2)}=c+b-6a,\ \mathbb{V}_{1}^{(2)}=2c-5b+2a.
\end{equation}
From the Binet formula (\ref{p1}) and Eq. (\ref{h10}), we have
\begin{equation}\label{h20}
\mathbb{J}_{n}^{(3)}=\frac{1}{7}\left(\rho 2^{n}-\mathbb{V}_{n}^{(2)}\right),
\end{equation}
where $\rho=a+b+c$. In particular, if $a=0$ and $b=c=1$, we obtain $\mathbb{J}_{n}^{(3)}=J_{n}^{(3)}$.

\begin{theorem}[Catalan Identity for $\mathbb{J}_{n}^{(3)}$]\label{t1}
For any nonnegative integers $n$ and $r$, we have
\begin{equation}\label{c1}
\left(\mathbb{J}_{n}^{(3)}\right)^{2}-\mathbb{J}_{n-r}^{(3)}\mathbb{J}_{n+r}^{(3)}=\frac{1}{49}\left\lbrace
\begin{array}{c}
2^{n}\rho \left(2^{r}\mathbb{V}_{n-r}^{(2)}-2\mathbb{V}_{n}^{(2)}+2^{-r}\mathbb{V}_{n+r}^{(2)}\right)\\
+7(4a^{2}+3b^{2}+c^{2}-2ac-3bc)\left(U_{r}^{(2)}\right)^{2}
\end{array}
\right\rbrace,
\end{equation}
where $U_{r}^{(2)}=j_{r-1}^{(3)}-J_{r+1}^{(3)}$ and $\rho=a+b+c$.
\end{theorem}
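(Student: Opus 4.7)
The plan is to mimic the proof of Lemma \ref{lem1}, using the compact Binet-type representation $\mathbb{J}_{n}^{(3)}=\frac{1}{7}\left(\rho 2^{n}-\mathbb{V}_{n}^{(2)}\right)$ from Eq. (\ref{h20}), and then reduce the problem to an auxiliary identity about the periodic sequence $\mathbb{V}_{n}^{(2)}$.

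First I would expand
\[
\left(\mathbb{J}_{n}^{(3)}\right)^{2}-\mathbb{J}_{n-r}^{(3)}\mathbb{J}_{n+r}^{(3)}=\frac{1}{49}\left[\left(\rho 2^{n}-\mathbb{V}_{n}^{(2)}\right)^{2}-\left(\rho 2^{n-r}-\mathbb{V}_{n-r}^{(2)}\right)\left(\rho 2^{n+r}-\mathbb{V}_{n+r}^{(2)}\right)\right].
\]
The $\rho^{2}2^{2n}$ contributions from both products cancel, and the remaining cross terms rearrange exactly as in the proof of Lemma \ref{lem1}, yielding
\[
\left(\mathbb{J}_{n}^{(3)}\right)^{2}-\mathbb{J}_{n-r}^{(3)}\mathbb{J}_{n+r}^{(3)}=\frac{1}{49}\left[2^{n}\rho\left(2^{r}\mathbb{V}_{n-r}^{(2)}-2\mathbb{V}_{n}^{(2)}+2^{-r}\mathbb{V}_{n+r}^{(2)}\right)+\left(\mathbb{V}_{n}^{(2)}\right)^{2}-\mathbb{V}_{n-r}^{(2)}\mathbb{V}_{n+r}^{(2)}\right].
\]
Comparing with the claimed formula, the only remaining task is the auxiliary identity
\[
\left(\mathbb{V}_{n}^{(2)}\right)^{2}-\mathbb{V}_{n-r}^{(2)}\mathbb{V}_{n+r}^{(2)}=7\bigl(4a^{2}+3b^{2}+c^{2}-2ac-3bc\bigr)\left(U_{r}^{(2)}\right)^{2}.
\]

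To establish this identity I would exploit the periodicity of $\mathbb{V}_{n}^{(2)}$ (period $3$ by Eq. (\ref{im2})) together with the values of $U_{r}^{(2)}=j_{r-1}^{(3)}-J_{r+1}^{(3)}$ supplied by Eq. (\ref{e8}): $U_{r}^{(2)}=0$ if $r\equiv0\pmod 3$ and $\left(U_{r}^{(2)}\right)^{2}=1$ otherwise. When $r\equiv 0\pmod 3$, both indices $n\pm r$ are congruent to $n\pmod 3$, so $\mathbb{V}_{n-r}^{(2)}=\mathbb{V}_{n}^{(2)}=\mathbb{V}_{n+r}^{(2)}$ and both sides vanish. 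When $r\not\equiv 0\pmod 3$ the claim reduces, by periodicity, to six numerical verifications (one for each residue of $n\pmod 3$ combined with $r\equiv 1,2\pmod 3$) using the three explicit values $c+b-6a$, $2c-5b+2a$, $-3c+4b+4a$ from Eq. (\ref{h10}). In each case one checks directly that $\left(\mathbb{V}_{n}^{(2)}\right)^{2}-\mathbb{V}_{n-r}^{(2)}\mathbb{V}_{n+r}^{(2)}$ simplifies to $7\bigl(4a^{2}+3b^{2}+c^{2}-2ac-3bc\bigr)$.

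The main obstacle is this last step: the six case expansions are tedious but entirely mechanical, and the quadratic form $4a^{2}+3b^{2}+c^{2}-2ac-3bc$ emerges as the common value of each. Assembling the pieces gives the stated Catalan identity, and the specialization $a=0$, $b=c=1$ recovers Lemma \ref{lem1} as a sanity check (then $\rho=2$, $\mathbb{V}_{n}^{(2)}=V_{n}^{(2)}$ and $4a^{2}+3b^{2}+c^{2}-2ac-3bc=1$, matching the coefficients in Lemma \ref{lem1}).
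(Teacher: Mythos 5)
Your proposal is correct and follows essentially the same route as the paper: expand via Eq. (\ref{h20}), cancel the $\rho^{2}2^{2n}$ terms, and reduce to the auxiliary identity $\left(\mathbb{V}_{n}^{(2)}\right)^{2}-\mathbb{V}_{n-r}^{(2)}\mathbb{V}_{n+r}^{(2)}=7\omega\left(U_{r}^{(2)}\right)^{2}$ with $\omega=4a^{2}+3b^{2}+c^{2}-2ac-3bc$. The only difference is that the paper dismisses this last step with ``after some algebra,'' whereas you spell out the finite case check by periodicity; your version is, if anything, more complete, and the specialization to $a=0$, $b=c=1$ is a sound consistency check.
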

\begin{proof}
From Eq. (\ref{h20}), we obtain
\begin{align*}
\left(\mathbb{J}_{n}^{(3)}\right)^{2}&-\mathbb{J}_{n-r}^{(3)}\mathbb{J}_{n+r}^{(3)}\\
&=\frac{1}{49}\left\lbrace
\begin{array}{c}
\left(\rho 2^{n}-\mathbb{V}_{n}^{(2)}\right)^{2}-\left(\rho 2^{n-r}-\mathbb{V}_{n-r}^{(2)}\right)\left(\rho 2^{n+r}-\mathbb{V}_{n+r}^{(2)}\right)
\end{array}
\right\rbrace\\
&=\frac{1}{49}\left\lbrace
\begin{array}{c}
\rho^{2}2^{2n}-2^{n+1}\rho \mathbb{V}_{n}^{(2)}+\left(\mathbb{V}_{n}^{(2)}\right)^{2}\\
-\rho^{2}2^{2n}+2^{n-r}\rho \mathbb{V}_{n+r}^{(2)}+2^{n+r}\rho \mathbb{V}_{n-r}^{(2)}-\mathbb{V}_{n-r}^{(2)}\mathbb{V}_{n+r}^{(2)}
\end{array}
\right\rbrace\\
&=\frac{1}{49}\left\lbrace
\begin{array}{c}
2^{n}\rho \left(2^{r}\mathbb{V}_{n-r}^{(2)}-2\mathbb{V}_{n}^{(2)}+2^{-r}\mathbb{V}_{n+r}^{(2)}\right)\\
+\left(\mathbb{V}_{n}^{(2)}\right)^{2}-\mathbb{V}_{n-r}^{(2)}\mathbb{V}_{n+r}^{(2)}
\end{array}
\right\rbrace.
\end{align*}
After some algebra, we obtain
\begin{align*}
\left(\mathbb{J}_{n}^{(3)}\right)^{2}-\mathbb{J}_{n-r}^{(3)}\mathbb{J}_{n+r}^{(3)}&=\frac{1}{49}\left\lbrace
\begin{array}{c}
2^{n}\rho \left(2^{r}\mathbb{V}_{n-r}^{(2)}-2\mathbb{V}_{n}^{(2)}+2^{-r}\mathbb{V}_{n+r}^{(2)}\right)\\
+7(4a^{2}+3b^{2}+c^{2}-2ac-3bc)\left(U_{r}^{(2)}\right)^{2}
\end{array}
\right\rbrace,
\end{align*}
where $U_{r}^{(2)}=j_{r-1}^{(3)}-J_{r+1}^{(3)}$ using Eq. (\ref{e8}). The proof is completed.
\end{proof}

\begin{theorem}[Gelin-Ces\`aro Identity]\label{t2}
For any non-negative integers $n\geq 2$, we have
\begin{equation}
\begin{aligned}
&\left(\mathbb{J}_{n}^{(3)}\right)^{4}-\mathbb{J}_{n-2}^{(3)}\mathbb{J}_{n-1}^{(3)}\mathbb{J}_{n+1}^{(3)}\mathbb{J}_{n+2}^{(3)}\\
&=\frac{1}{7}\left\lbrace
\begin{array}{c}
\left(\mathbb{J}_{n}^{(3)}\right)^{2}\left(2\omega+2^{n-2}\rho \left(3\mathbb{W}_{n+2}^{(2)}-2\mathbb{W}_{n+1}^{(2)}\right)\right)\\
-\frac{1}{7}\left(w^{2}+2^{n-2}\rho \omega \left(3\mathbb{W}_{n+2}^{(2)}-2\mathbb{W}_{n+1}^{(2)}\right)-3\cdot 2^{2n-3}\rho^{2} \mathbb{W}_{n+1}^{(2)}\mathbb{W}_{n+2}^{(2)}\right)
\end{array}
\right\rbrace,
\end{aligned}
\end{equation}
where $\mathbb{W}_{n+2}^{(2)}=\frac{1}{7}\left(5\mathbb{V}_{n+1}-3\mathbb{V}_{n}^{(2)}\right)$ and $\mathbb{V}_{n}^{(2)}$ as in Eq. (\ref{h10}).
\end{theorem}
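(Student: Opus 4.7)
The plan is to parallel the approach used in Theorem \ref{thm:1}, substituting the generalized Catalan identity from Theorem \ref{t1} for Lemma \ref{lem1}. Writing $\omega=4a^{2}+3b^{2}+c^{2}-2ac-3bc$ for the quadratic appearing in (\ref{c1}), and noting that $(U_{1}^{(2)})^{2}=(U_{2}^{(2)})^{2}=1$ by (\ref{e8}), I will specialize (\ref{c1}) at $r=1$ and $r=2$, collapse the resulting expressions via the recurrence (\ref{im2}) and the period-$3$ identity $\mathbb{V}_{n+3}^{(2)}=\mathbb{V}_{n}^{(2)}$, and then multiply the two identities together.

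At $r=1$, the bracketed combination $4\mathbb{V}_{n-1}^{(2)}-4\mathbb{V}_{n}^{(2)}+\mathbb{V}_{n+1}^{(2)}$ collapses to $3\mathbb{V}_{n-1}^{(2)}-5\mathbb{V}_{n}^{(2)}=-7\mathbb{W}_{n+1}^{(2)}$ after replacing $\mathbb{V}_{n+1}^{(2)}$ using (\ref{im2}), giving
\[\left(\mathbb{J}_{n}^{(3)}\right)^{2}-\mathbb{J}_{n-1}^{(3)}\mathbb{J}_{n+1}^{(3)}=\tfrac{1}{7}\bigl(\omega-2^{n-1}\rho\,\mathbb{W}_{n+1}^{(2)}\bigr).\]
At $r=2$, after using periodicity to rewrite $\mathbb{V}_{n-2}^{(2)}=\mathbb{V}_{n+1}^{(2)}$ and $\mathbb{V}_{n+2}^{(2)}=\mathbb{V}_{n-1}^{(2)}$, the combination $16\mathbb{V}_{n-2}^{(2)}-8\mathbb{V}_{n}^{(2)}+\mathbb{V}_{n+2}^{(2)}$ reduces via $\mathbb{V}_{n-1}^{(2)}=-\mathbb{V}_{n}^{(2)}-\mathbb{V}_{n+1}^{(2)}$ to $15\mathbb{V}_{n+1}^{(2)}-9\mathbb{V}_{n}^{(2)}=21\mathbb{W}_{n+2}^{(2)}$, yielding
\[\left(\mathbb{J}_{n}^{(3)}\right)^{2}-\mathbb{J}_{n-2}^{(3)}\mathbb{J}_{n+2}^{(3)}=\tfrac{1}{7}\bigl(\omega+3\cdot 2^{n-2}\rho\,\mathbb{W}_{n+2}^{(2)}\bigr).\]

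I will then solve each identity for $\mathbb{J}_{n-r}^{(3)}\mathbb{J}_{n+r}^{(3)}$, multiply the two resulting products, and expand. The $(\mathbb{J}_{n}^{(3)})^{4}$ term cancels upon transposition, the cross contribution assembles into $\frac{1}{7}(\mathbb{J}_{n}^{(3)})^{2}\bigl(2\omega+2^{n-2}\rho(3\mathbb{W}_{n+2}^{(2)}-2\mathbb{W}_{n+1}^{(2)})\bigr)$, and the product of the two constant residuals produces $\frac{1}{49}\bigl(\omega^{2}+2^{n-2}\rho\omega(3\mathbb{W}_{n+2}^{(2)}-2\mathbb{W}_{n+1}^{(2)})-3\cdot 2^{2n-3}\rho^{2}\mathbb{W}_{n+1}^{(2)}\mathbb{W}_{n+2}^{(2)}\bigr)$. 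The main obstacle is purely arithmetic: the $r=1$ and $r=2$ pieces carry opposite signs on the $\mathbb{W}$-terms and differ by a factor of $2$ in the exponent of $2$, so careful bookkeeping is required to extract precisely the stated combinations $3\mathbb{W}_{n+2}^{(2)}-2\mathbb{W}_{n+1}^{(2)}$ and $-3\cdot 2^{2n-3}\rho^{2}\mathbb{W}_{n+1}^{(2)}\mathbb{W}_{n+2}^{(2)}$. No further ingredient beyond Theorem \ref{t1}, the recurrence (\ref{im2}), and the period-$3$ property of $\mathbb{V}_{n}^{(2)}$ is needed.
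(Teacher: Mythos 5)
Your proposal is correct and follows essentially the same route as the paper: specialize the Catalan identity of Theorem \ref{t1} at $r=1$ and $r=2$, reduce the $\mathbb{V}$-combinations to $-7\mathbb{W}_{n+1}^{(2)}$ and $21\mathbb{W}_{n+2}^{(2)}$ via the recurrence (\ref{im2}) and the period-$3$ property, then multiply the two resulting identities and collect terms. The sign and power-of-two bookkeeping you flag does work out exactly as you describe, reproducing the stated combinations $2^{n-2}\rho\left(3\mathbb{W}_{n+2}^{(2)}-2\mathbb{W}_{n+1}^{(2)}\right)$ and $-3\cdot 2^{2n-3}\rho^{2}\mathbb{W}_{n+1}^{(2)}\mathbb{W}_{n+2}^{(2)}$.
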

\begin{proof}
From Eq. (\ref{c1}) in Theorem \ref{t1} and $r=1$ and $r=2$, we obtain
\begin{align*}
\left(\mathbb{J}_{n}^{(3)}\right)^{2}-\mathbb{J}_{n-1}^{(3)}\mathbb{J}_{n+1}^{(3)}&=\frac{1}{49}\left\lbrace
\begin{array}{c}
2^{n}\rho \left(2\mathbb{V}_{n-1}^{(2)}-2\mathbb{V}_{n}^{(2)}+2^{-1}\mathbb{V}_{n+1}^{(2)}\right)\\
+7(4a^{2}+3b^{2}+c^{2}-2ac-3bc)
\end{array}
\right\rbrace\\
&=\frac{1}{49}\left\lbrace
\begin{array}{c}
2^{n-1}\rho \left(4\mathbb{V}_{n-1}^{(2)}-4\mathbb{V}_{n}^{(2)}+\mathbb{V}_{n+1}^{(2)}\right)\\
+7(4a^{2}+3b^{2}+c^{2}-2ac-3bc)
\end{array}
\right\rbrace\\
&=\frac{1}{49}\left\lbrace
\begin{array}{c}
7(4a^{2}+3b^{2}+c^{2}-2ac-3bc)\\
-2^{n-1}\rho \left(5\mathbb{V}_{n}^{(2)}-3\mathbb{V}_{n-1}^{(2)}\right)
\end{array}
\right\rbrace\\
&=\frac{1}{7}\left(
\begin{array}{c}
\omega-2^{n-1}\rho \mathbb{W}_{n+1}^{(2)}
\end{array}
\right)
\end{align*}
and
\begin{align*}
\left(\mathbb{J}_{n}^{(3)}\right)^{2}-\mathbb{J}_{n-2}^{(3)}\mathbb{J}_{n+2}^{(3)}&=\frac{1}{7}\left(
\begin{array}{c}
\omega+3\cdot 2^{n-2}\rho \mathbb{W}_{n+2}^{(2)}
\end{array}
\right),
\end{align*}
where $5\mathbb{V}_{n}^{(2)}-3\mathbb{V}_{n-1}^{(2)}=7\mathbb{W}_{n+1}^{(2)}$ and $\omega=4a^{2}+3b^{2}+c^{2}-2ac-3bc$. So, we can write the next equality:
 \begin{align*}
 &\mathbb{J}_{n-2}^{(3)}\mathbb{J}_{n-1}^{(3)}\mathbb{J}_{n+1}^{(3)}\mathbb{J}_{n+2}^{(3)}\\
 &=\left( \left(\mathbb{J}_{n}^{(3)}\right)^{2}-\frac{1}{7}\left(
\begin{array}{c}
\omega-2^{n-1}\rho \mathbb{W}_{n+1}^{(2)}
\end{array}
\right)\right)\left( \left(\mathbb{J}_{n}^{(3)}\right)^{2}-\frac{1}{7}\left(
\begin{array}{c}
\omega+3\cdot 2^{n-2}\rho \mathbb{W}_{n+2}^{(2)}
\end{array}
\right)\right)\\
&=\left\lbrace \begin{array}{c}
\left(\mathbb{J}_{n}^{(3)}\right)^{4}-\frac{1}{7}\left(\mathbb{J}_{n}^{(3)}\right)^{2}\cdot \left(2\omega +3\cdot 2^{n-2}\rho \mathbb{W}_{n+2}^{(2)}-2^{n-1}\rho \mathbb{W}_{n+1}^{(2)}\right)\\
+\frac{1}{49}\left(\omega-2^{n-1}\rho \mathbb{W}_{n+1}^{(2)}\right)\left(\omega+3\cdot 2^{n-2}\rho \mathbb{W}_{n+2}^{(2)}\right)
\end{array}
\right\rbrace.
\end{align*}
Then, we have
\begin{align*}
 &\left(\mathbb{J}_{n}^{(3)}\right)^{4}-\mathbb{J}_{n-2}^{(3)}\mathbb{J}_{n-1}^{(3)}\mathbb{J}_{n+1}^{(3)}\mathbb{J}_{n+2}^{(3)}\\
 &=\frac{1}{7}\left\lbrace
\begin{array}{c}
\left(\mathbb{J}_{n}^{(3)}\right)^{2}\left(2\omega+2^{n-2}\rho \left(3\mathbb{W}_{n+2}^{(2)}-2\mathbb{W}_{n+1}^{(2)}\right)\right)\\
-\frac{1}{7}\left(w^{2}+2^{n-2}\rho \omega \left(3\mathbb{W}_{n+2}^{(2)}-2\mathbb{W}_{n+1}^{(2)}\right)-3\cdot 2^{2n-3}\rho^{2} \mathbb{W}_{n+1}^{(2)}\mathbb{W}_{n+2}^{(2)}\right)
\end{array}
\right\rbrace.
\end{align*}
The proof is completed.
\end{proof}

By the aid of the last theorem we have the following corollary.
\begin{corollary}
For any non-negative integers $n\geq 2$, we have
\begin{align*}
 &\left(\mathbb{J}_{n}^{(3)}\right)^{4}-\mathbb{J}_{n-2}^{(3)}\mathbb{J}_{n-1}^{(3)}\mathbb{J}_{n+1}^{(3)}\mathbb{J}_{n+2}^{(3)}\\
&=\frac{1}{7}\left\{ 
\begin{array}{ccc}
\left\lbrace
\begin{array}{c}
\left(\mathbb{J}_{n}^{(3)}\right)^{2}\left(2\omega+2^{n-2}\rho A\right)\\
-\frac{1}{7}\left(w^{2}+2^{n-2}\rho \omega A-3\cdot 2^{2n-3}\rho^{2} \mathbb{T}_{n}^{(2)}\right)
\end{array}
\right\rbrace, & \textrm{if} & \mymod{n}{0}{3} \\ 
\left\lbrace
\begin{array}{c}
\left(\mathbb{J}_{n}^{(3)}\right)^{2}\left(2\omega+2^{n-2}\rho B\right)\\
-\frac{1}{7}\left(w^{2}+2^{n-2}\rho \omega B-3\cdot 2^{2n-3}\rho^{2} \mathbb{T}_{n}^{(2)}\right)
\end{array}
\right\rbrace,  & \textrm{if} & \mymod{n}{1}{3} \\ 
\left\lbrace
\begin{array}{c}
\left(\mathbb{J}_{n}^{(3)}\right)^{2}\left(2\omega+2^{n-2}\rho C\right)\\
-\frac{1}{7}\left(w^{2}+2^{n-2}\rho \omega C-3\cdot 2^{2n-3}\rho^{2} \mathbb{T}_{n}^{(2)}\right)
\end{array}
\right\rbrace, & \textrm{if} & \mymod{n}{2}{3}
\end{array}
\right. ,
\end{align*}
where $$\left\{ 
\begin{array}{c}
A=-c-10b+24a,\ B=-11c+23b-2a,\ C=12c-13b-22a,
\end{array}\right. 
$$ and $$\mathbb{T}_{n}^{(2)}=\left\{ 
\begin{array}{ccc}
(2c-b-6a)(c-4b+4a), & \textrm{if} & \mymod{n}{0}{3} \\ 
(c-4b+4a)(-3c+5b+2a),  & \textrm{if} & \mymod{n}{1}{3} \\ 
(-3c+5b+2a)(2c-b-6a), & \textrm{if} & \mymod{n}{2}{3}
\end{array}
\right. .$$
\end{corollary}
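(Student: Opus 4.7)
The plan is to deduce the corollary from Theorem \ref{t2} purely by case analysis on $n \bmod 3$, exploiting the fact that all quantities in the expression depend on $n$ only through the periodic sequence $\mathbb{V}_{n}^{(2)}$.

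First, I would record the explicit three-periodicity of $\mathbb{W}_{n}^{(2)}$. Since $\mathbb{W}_{n+2}^{(2)}=\tfrac{1}{7}(5\mathbb{V}_{n+1}^{(2)}-3\mathbb{V}_{n}^{(2)})$ and $\mathbb{V}_{n}^{(2)}$ is periodic of period $3$ with the three values listed in Eq.~(\ref{h10}), $\mathbb{W}_{n}^{(2)}$ is also periodic of period $3$. A direct substitution of the three values of $\mathbb{V}_{n}^{(2)}$ (using $\mathbb{V}_{n+3}^{(2)}=\mathbb{V}_{n}^{(2)}$ whenever a shift takes us out of $\{0,1,2\}$) yields
\[
\mathbb{W}_{0}^{(2)}=-3c+5b+2a,\qquad \mathbb{W}_{1}^{(2)}=2c-b-6a,\qquad \mathbb{W}_{2}^{(2)}=c-4b+4a.
\]

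Next I would substitute the appropriate pair $(\mathbb{W}_{n+1}^{(2)},\mathbb{W}_{n+2}^{(2)})$ into the two quantities that appear in Theorem \ref{t2}, namely $3\mathbb{W}_{n+2}^{(2)}-2\mathbb{W}_{n+1}^{(2)}$ and $\mathbb{W}_{n+1}^{(2)}\mathbb{W}_{n+2}^{(2)}$, according to the residue class of $n$ modulo $3$. For $n\equiv 0\pmod 3$ one has $(\mathbb{W}_{n+1}^{(2)},\mathbb{W}_{n+2}^{(2)})=(\mathbb{W}_{1}^{(2)},\mathbb{W}_{2}^{(2)})$, and a short linear computation gives $3\mathbb{W}_{2}^{(2)}-2\mathbb{W}_{1}^{(2)}=-c-10b+24a=A$ together with $\mathbb{W}_{1}^{(2)}\mathbb{W}_{2}^{(2)}=(2c-b-6a)(c-4b+4a)=\mathbb{T}_{n}^{(2)}$. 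The cases $n\equiv 1\pmod 3$ and $n\equiv 2\pmod 3$ are treated in the same way, with the pair cyclically shifted, producing the constants $B$ and $C$ as well as the remaining two values of $\mathbb{T}_{n}^{(2)}$.

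Finally, plugging these three pairs back into the formula of Theorem \ref{t2},
\[
\tfrac{1}{7}\!\left\{\left(\mathbb{J}_{n}^{(3)}\right)^{2}\!\left(2\omega+2^{n-2}\rho\bigl(3\mathbb{W}_{n+2}^{(2)}-2\mathbb{W}_{n+1}^{(2)}\bigr)\right)-\tfrac{1}{7}\!\left(\omega^{2}+2^{n-2}\rho\omega\bigl(3\mathbb{W}_{n+2}^{(2)}-2\mathbb{W}_{n+1}^{(2)}\bigr)-3\cdot 2^{2n-3}\rho^{2}\mathbb{W}_{n+1}^{(2)}\mathbb{W}_{n+2}^{(2)}\right)\right\},
\]
yields exactly the three branches of the piecewise expression in the corollary. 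No obstacle of substance arises; the only care required is to keep the signs correct when reducing the indices modulo $3$ (in particular when writing $\mathbb{W}_{n+1}^{(2)}$ and $\mathbb{W}_{n+2}^{(2)}$ for $n\equiv 1$ or $2 \pmod 3$, where the shifted index wraps back to $\mathbb{W}_{0}^{(2)}$). All of this is direct linear arithmetic, so the corollary follows as an immediate specialization of Theorem \ref{t2}.
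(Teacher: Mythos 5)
Your proposal is correct and is precisely the intended argument: the paper offers no separate proof for this corollary (it is stated as an immediate consequence of Theorem \ref{t2}), and your computation of the three periodic values $\mathbb{W}_{0}^{(2)}=-3c+5b+2a$, $\mathbb{W}_{1}^{(2)}=2c-b-6a$, $\mathbb{W}_{2}^{(2)}=c-4b+4a$ followed by case-by-case substitution into $3\mathbb{W}_{n+2}^{(2)}-2\mathbb{W}_{n+1}^{(2)}$ and $\mathbb{W}_{n+1}^{(2)}\mathbb{W}_{n+2}^{(2)}$ reproduces $A$, $B$, $C$ and $\mathbb{T}_{n}^{(2)}$ exactly as stated. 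This mirrors the case analysis the author carried out explicitly at the end of the proof of Theorem \ref{thm:1} for the special sequence $J_{n}^{(3)}$.
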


\medskip
\end{document}